\numberwithin{equation}{section}
\newtheorem{lemma}{Lemma}[section]
\newtheorem{prop}[lemma]{Proposition}
\newtheorem{theorem}[lemma]{Theorem}
\newtheorem{cor}[lemma]{Corollary}
\newtheorem{rem}[lemma]{Remark}
\newcommand{\re}{\begin{rem}\rm}
  \newcommand{\mar}{\end{rem}}
\renewcommand{\for}{\begin{eqnarray*}}
\newcommand{\mel}{\end{eqnarray*}}
\newcommand{\nz}{{\mathbb N}}
\newcommand{\cz}{{\mathbb C}}
\newcommand{\ten}{\otimes}
\newcommand{\qd}{\end{proof}\vspace{0.5ex}}
\newcommand{\Om}{\Omega}
\newcommand{\A}{{\mathcal A}}
\newcommand{\M}{{\mathcal M}}
\newcommand{\U}{{\mathcal U}}
\newcommand{\pf}{\begin{proof}}
\newcommand{\xspace}{\hbox{\kern-2.5pt}}
\newcommand{\xyspace}{\hbox{\kern-1.1pt}}
\definecolor{LightGray}{rgb}{0.94,0.94,0.94}
\definecolor{VeryLightBlue}{rgb}{0.9,0.9,1}
\definecolor{LightBlue}{rgb}{0.8,0.8,1}
\definecolor{DarkBlue}{rgb}{0,0,0.6}
\definecolor{LightGreen}{rgb}{0.88,1,0.88}
\definecolor{MidGreen}{rgb}{0.6,1,0.6}
\definecolor{DarkGreen}{rgb}{0,0.6,0}
\definecolor{DarkGrreen}{rgb}{0,0.8,0}
\definecolor{VeryLightYellow}{rgb}{1,1,0.9}
\definecolor{LightYellow}{rgb}{1,1,0.6}
\definecolor{MidYellow}{rgb}{1,1,0.5}
\definecolor{DarkYellow}{rgb}{0.8,1,0.3}
\definecolor{VeryLightRed}{rgb}{1,0.9,0.9}
\definecolor{LightRed}{rgb}{1,0.8,0.8}
\definecolor{DarkRed}{rgb}{0.8,0.2,0}
\definecolor{DarkRedb}{rgb}{0.6,0.2,0}
\definecolor{DarkLila}{rgb}{0.8,0,1}
\definecolor{Beige}{rgb}{0.96,0.96,0.86}
\definecolor{Gold}{rgb}{1.,0.84,0.}
\definecolor{Goldb}{rgb}{0.7,0.3,0.5}
\definecolor{MyYellow}{rgb}{1.,0.84,0.8}
\begin{document}

\title[]{Generalized $q$-gaussian von Neumann algebras with coefficients, III. Unique prime factorization results.}
\author[Marius Junge]{Marius Junge}
\address{Department of Mathematics\\
University of Illinois, Urbana, IL 61801, USA} 
\email[Marius Junge]{junge@math.uiuc.edu}

\author[Bogdan Udrea]{Bogdan Udrea}
\address{Department of Mathematics\\
University of Iowa, Iowa City, IA 52242, USA} 
\email[Bogdan Udrea]{bogdanteodor-udrea@uiowa.edu}

\begin{abstract}We prove some unique prime factorization results for tensor products of type $II_1$ factors of the form $\Gamma_q(\cz, S \ten H)$ arising from symmetric independent copies with sub-exponential dimensions of the spaces $D_k(S)$ and dim$(H)$ finite and greater than a constant depending on $q$.
\end{abstract}

\maketitle
\section{Introduction.}
This article is a continuation of the program initiated in \cite{JungeUdreaGQC}. In \cite{JungeUdreaGQC} we introduced the generalized $q$-gaussian von Neumann algebras $\Gamma_q(B,S\ten H)$ with coefficients in $B$ and proved their strong solidity relative to $B$ under the assumptions of $dim_B(D_k(S))$ sub-exponential and $dim(H)<\infty$ (see \cite{JungeUdreaGQC}, Def. 3.18 and Cor. 7.4). In a subsequent paper (\cite{JungeUdreaACS}) we investigated the presence of non-trivial central sequences and we showed they do not exist when $B$ is a finite dimensional factor, the dimensions over $B$ of the modules $D_k(S)$ are sub-exponential and the dimension of $H$ is finite and greater than a constant depending on $q$. In the present work we prove some unique prime factorization results for tensor products of von Neumann algebras of the form $\Gamma_q(\cz,S\ten H)$ arising from a sequence of symmetric independent copies over $\cz$ and having sub-exponential dimensions (over $\cz$) of the spaces $D_k(S)$ introduced in \cite{JungeUdreaGQC}, Def. 3.18. The first results of this kind for type $II_1$ factors - arising from either (discrete) ICC non-amenable hyperbolic groups or (discrete) subgroups of connected simple Lie groups of rank one - have been obtained by Ozawa and Popa in \cite{OzawaPopaUPF}, through a combination of Ozawa's $C^*$-algebraic techniques previously used in \cite{OzawaSolid} and the powerful intertwining and unitary conjugacy techniques of Popa (see e.g. \cite{PoBe}, Appendix and \cite{PoI}, Thm. 2.1). Let's recall that for $(\M,\tau)$ a type $II_1$ factor and $t>0$, the amplification of $\M$ by $t$ is defined as $\M^t=p(M_n(\cz) \ten \M)p$, where $n>t$ and $p \in M_n(\cz) \ten \M$ is a projection with $\tau(p)=t/n$. Our main result is (see also Thm. 1 in \cite{OzawaPopaUPF}):
\begin{theorem} Let $M_k=\Gamma_{q_k}(\cz,S_k \ten H_k)$ coming from a sequence of symmetric independent copies $(\pi_j^k,\cz, A_k,D_k)$ for $-1<q_k<1$ and $1 \leq k \leq n$. Assume that for all $1 \leq k \leq n$, $H_k$ is finite dimensional and $dim_{\cz}((D_k)_i(S_k))<Cd^i$ for all $i$ and some constants $C,d>0$. Suppose that $M=\overline{\bigotimes}_{k=1}^n M_k=N_1 \bar{\ten} N_2$ for some type $II_1$ factors $N_1$ and $N_2$. Then there exists $t>0$ and a partition $I_1 \sqcup I_2=\{1,\ldots,n\}$ such that, modulo conjugacy by an unitary in $M$, we have $N_1^t=\overline{\bigotimes}_{k \in I_1} M_k$ and $N_2^{1/t}=\overline{\bigotimes}_{k \in I_2} M_k$.
\end{theorem}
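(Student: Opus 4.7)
The plan is to follow the strategy of Ozawa and Popa \cite{OzawaPopaUPF} for solid factors, drawing on two inputs from the earlier papers in this series: (i) each factor $M_k$ is strongly solid relative to $\cz$ by Cor.~7.4 of \cite{JungeUdreaGQC}, and in particular solid and prime; and (ii) each $M_k$ is a full non-amenable type II$_1$ factor by \cite{JungeUdreaACS}. Write $\hat{M}_k := \overline{\bigotimes}_{j \ne k} M_j$, so that $M = M_k \bar{\ten} \hat{M}_k$ for every $k$.

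The core step is the \emph{Ozawa-Popa dichotomy}: for every $k \in \{1,\dots,n\}$, either $N_1 \prec_M \hat{M}_k$ or $N_2 \prec_M \hat{M}_k$. I would prove this by contradiction. If neither intertwining held, Popa's intertwining criterion would supply sequences $(u_\alpha) \subset \U(N_1)$ and $(v_\beta) \subset \U(N_2)$ whose $\hat{M}_k$-conditional expectations tend to zero in $2$-norm. Since $N_1$ and $N_2$ commute inside $M = M_k \bar{\ten} \hat{M}_k$, a Cauchy-Schwarz/bimodule computation would convert this into an asymptotic $M_k$-amenability statement for $N_2$; combined with the solidity of $M_k$, this would force an amenable corner in the type II$_1$ factor $N_2$, a contradiction.

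The dichotomy then produces a partition $I_1 \sqcup I_2 = \{1,\dots,n\}$ with $N_1 \prec_M \hat{M}_k$ for $k \in I_2$ and $N_2 \prec_M \hat{M}_k$ for $k \in I_1$. Iterating Popa's intertwining lemma over $k$, and using that the intersections of the corresponding $\hat{M}_k$'s inside $M$ are precisely the tensor subfactors indexed by $I_1$ and $I_2$, one obtains $N_1 \prec_M \overline{\bigotimes}_{k \in I_1} M_k$ and $N_2 \prec_M \overline{\bigotimes}_{k \in I_2} M_k$.

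To finish, I would upgrade these intertwinings to a unitary conjugacy that matches amplifications. Here the fullness of each $M_k$, and hence of the two tensor products $\overline{\bigotimes}_{k \in I_j} M_k$ (as fullness is preserved under tensor products of full factors), is decisive: it ensures that the relative commutants computed along the intertwining are trivial enough to force the amplification parameter to close up, yielding $t>0$ and a unitary $u \in M$ with $uN_1 u^* = (\overline{\bigotimes}_{k \in I_1} M_k)^t$ and $uN_2u^* = (\overline{\bigotimes}_{k \in I_2} M_k)^{1/t}$. The hardest part is the dichotomy in the second paragraph: the existing solidity-based arguments are typically phrased for group or crossed-product factors, and adapting them to the generalized $q$-gaussian framework with coefficients $\cz$ requires invoking strong solidity in its precise intertwining/bimodule form and verifying that it combines correctly with the commuting pair $(N_1,N_2)$ inside $M_k \bar{\ten} \hat{M}_k$.
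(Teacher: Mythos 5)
Your overall architecture follows Ozawa--Popa, but the two load-bearing steps are not justified, and the first one is exactly where this paper has to do something genuinely different from \cite{OzawaPopaUPF}. Your proposed proof of the dichotomy --- almost-orthogonal unitaries in $N_1$ and $N_2$, a Cauchy--Schwarz/bimodule computation, then an amenable corner of $N_2$ from ``solidity of $M_k$'' --- is the skeleton of the Ozawa--Popa argument, but that argument does not run on solidity of the tensor factors alone: it needs property (AO) (or the $C^*$-algebraic local reflexivity machinery) for the ambient algebra, which is not available here and which the paper explicitly avoids. Solidity of $M_k$ constrains relative commutants of diffuse subalgebras \emph{of $M_k$}; the factors $N_1,N_2$ live in $M=M_k\bar{\ten}\widehat{M}_k$ and solidity of $M_k$ says nothing about them directly. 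What the paper actually does is: (a) use Prop.~2.7 of \cite{PoVaI} on the commuting squares of the regular subalgebras $\widehat{M}_I$ to find a single $k$ with $N_1'\cap M=N_2$ non-amenable relative to $\widehat{M}_k$; and (b) --- this is the key structural observation your sketch is missing --- rewrite $M=\widehat{M}_k\bar{\ten}\Gamma_{q_k}(\cz,S_k\ten H_k)=\Gamma_{q_k}(\widehat{M}_k,S_k\ten H_k)$, i.e.\ absorb the complementary tensor factors into the \emph{coefficient algebra}, check that $dim_{\widehat{M}_k}((D_k)_i(S_k))$ is still sub-exponential, and apply the relative strong solidity theorem (Cor.~7.4 of \cite{JungeUdreaGQC}) with coefficients $B=\widehat{M}_k$ rather than $B=\cz$. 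Applied to an abelian $\A\subset N_1$ with $\A\nprec_M\widehat{M}_k$, this forces $N_M(\A)''\supset N_2$ to be amenable relative to $\widehat{M}_k$, contradicting (a); hence $N_1\prec_M\widehat{M}_k$. Strong solidity of each individual $M_k$ relative to $\cz$, which is all you invoke, does not reach subalgebras of the tensor product.

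The combination step also has a gap. From $N_1\prec_M\widehat{M}_k$ for all $k\in I_2$ you cannot simply ``intersect the targets'' to conclude $N_1\prec_M\overline{\bigotimes}_{k\in I_1}M_k$: intertwining into $P$ and into $Q$ does not in general yield intertwining into $P\cap Q$. The paper sidesteps this by upgrading a \emph{single} intertwining $N_1\prec_M\widehat{M}_k$ to a unitary conjugacy $uN_1u^*\subset(\widehat{M}_k)^t$ via Prop.~12 of \cite{OzawaPopaUPF} (which needs only that $N_1'\cap M=N_2$ is a factor --- no fullness is used anywhere in the argument), and then runs an induction on $n$ inside $(\widehat{M}_k)^t$, with $N_{2,1}=N_1'\cap u(\widehat{M}_k)^tu^*$ playing the role of the new second tensor factor. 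Your final ``fullness closes up the amplification'' step is likewise both unnecessary and not obviously available: absence of central sequences is proved in \cite{JungeUdreaACS} only under the extra hypothesis $dim(H_k)\ge d(q_k)$, which Theorem~1.1 does not assume. I would restructure the argument as an induction with the coefficient-absorption identity $\M\bar{\ten}\Gamma_q(B,S\ten H)=\Gamma_q(B\bar{\ten}\M,S\ten H)$ at its core.
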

To prove Thm. 1.1, instead of relying on $C^*$-algebraic techniques and the property (AO) as in \cite{OzawaSolid, OzawaPopaUPF}, we use our relative strong solidity result in \cite{JungeUdreaGQC}, Cor. 7.4. We should note that the von Neumann algebras $\Gamma_{q_k}(\cz,S_k\ten H_k)$ are automatically factors if $dim(H_k) \geq d(q_k)$ (Prop. 3.23 in \cite{JungeUdreaGQC}). By repeatedly applying Thm. 1.1 one obtains
\begin{cor}Let $M_k=\Gamma_{q_k}(\cz,S_k\ten H_k)$ with the dimensions (over $\cz$) of the spaces $(D_k)_i(S_k)$ sub-exponential and $\infty>dim(H_k)\geq d(q_k)$ for all $1 \leq k \leq n$. Assume that
\[M_1 \bar{\ten} \cdots \bar{\ten} M_n = N_1 \bar{\ten} \cdots \bar{\ten} N_m,\]
for $m \geq n$ and some type $II_1$ factors $N_1, \ldots, N_m$. Then $m=n$ and there exist $t_1,\ldots,t_n>0$ with $t_1t_2 \cdots t_n=1$ such that, after permutation of indices and unitary conjugacy, we have $N_k^{t_k}=M_k$.
\end{cor}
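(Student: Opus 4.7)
The plan is to induct on $m$, using Theorem 1.1 to peel off the first $N$-factor at each step. The base case $m = 1$ forces $n = 1$ by the hypothesis $m \geq n$, and the conclusion holds trivially with $t_1 = 1$.

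For the inductive step with $m \geq 2$, I would regroup the right-hand side as $M = N_1 \bar{\otimes} Q$, where $Q := N_2 \bar{\otimes} \cdots \bar{\otimes} N_m$ is again a type $II_1$ factor. Theorem 1.1, applied to this two-factor decomposition, then yields $t > 0$ and a partition $\{1, \ldots, n\} = I_1 \sqcup I_2$ such that, after a unitary conjugation in $M$,
\[
N_1^t = \overline{\bigotimes}_{k \in I_1} M_k \qquad \text{and} \qquad Q^{1/t} = \overline{\bigotimes}_{k \in I_2} M_k.
\]
Since $N_1$ and $Q$ are type $II_1$ factors, neither $I_1$ nor $I_2$ can be empty, as the empty tensor product would collapse one side to $\mathbb{C}$.

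I would then apply the inductive hypothesis to the second identity, rewritten via the standard fact that an amplification of a tensor product can be absorbed into a single tensor factor, as
\[
\overline{\bigotimes}_{k \in I_2} M_k = N_2^{1/t} \bar{\otimes} N_3 \bar{\otimes} \cdots \bar{\otimes} N_m.
\]
This equation features $|I_2|$ prime blocks of the required form on the left and $m - 1$ type $II_1$ factors on the right, with $|I_2| \leq n - 1 \leq m - 1$, so the corollary's hypothesis is satisfied at level $m - 1$. The induction then produces $|I_2| = m - 1$, a bijection $\sigma : \{2, \ldots, m\} \to I_2$, and positive scalars $s_2, \ldots, s_m$ of product one such that, after a further unitary conjugation, $(N_2^{1/t})^{s_2} = M_{\sigma(2)}$ and $N_j^{s_j} = M_{\sigma(j)}$ for $3 \leq j \leq m$.

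Since $|I_1| = n - (m - 1) \geq 1$, I deduce $n \geq m$; combined with the standing hypothesis $m \geq n$ this forces $m = n$ and $|I_1| = 1$. Writing $I_1 = \{k_0\}$, extending $\sigma$ by $\sigma(1) := k_0$, and setting $t_1 := t$, $t_2 := s_2/t$, and $t_j := s_j$ for $j \geq 3$ realizes $N_k^{t_k} = M_{\sigma(k)}$ for each $k$, with $\prod_{k=1}^n t_k = t \cdot (s_2/t) \cdot s_3 \cdots s_m = 1$. Beyond invoking Theorem 1.1 as a black box, the only real work is the bookkeeping of the amplification parameters across the induction and the verification that each intermediate decomposition still satisfies the hypotheses of the corollary; I expect this amplification bookkeeping, rather than any new analytic ingredient, to be the main (mild) obstacle.
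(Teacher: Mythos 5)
Your proof is correct and follows essentially the same route the paper intends: the paper derives this corollary simply ``by repeatedly applying Thm.\ 1.1,'' and your induction on $m$, peeling off $N_1$ at each step and absorbing the amplification $1/t$ into $N_2$, is precisely that repeated application made explicit. The counting argument ($|I_1|\geq 1$ at each stage forcing $n\geq m$, hence $m=n$) and the bookkeeping $t_1=t$, $t_2=s_2/t$, $t_j=s_j$ with product one are all sound.
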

When the factors $N_j$ are assumed to be prime the assumption $m \geq n$ becomes unnecessary and hence we obtain
\begin{cor} Let $M_1, \ldots, M_n$ be generalized $q$-gaussians as above. Suppose that for some $m \in \nz$ and prime type $II_1$ factors $N_1, \ldots, N_m$ we have
\[M_1 \bar{\ten} \cdots \bar{\ten} M_n = N_1 \bar{\ten} \cdots \bar{\ten} N_m.\]
Then $m=n$ and there exist $t_1,\ldots,t_n>0$ with $t_1t_2 \cdots t_n=1$ such that, after permutation of indices and unitary conjugacy, we have $N_k^{t_k}=M_k$. In particular this holds if each $N_j=\Gamma_{q_j}(\cz,T_j \ten K_j)$ is a generalized $q_j$-gaussian with scalar coefficients, sub-exponential dimensions of $(D_j)_i(T_j)$ and $dim(K_j)<\infty$.
\end{cor}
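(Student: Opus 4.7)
The plan is to iterate Theorem~1.1, using the primeness of each $N_j$ to force the partition there to be a singleton at every step. First, group $M = N_1\bar{\ten}(N_2\bar{\ten}\cdots\bar{\ten} N_m)$ and apply Theorem~1.1 to obtain $t_1>0$ and a partition $I_1 \sqcup J_1 = \{1,\ldots,n\}$ such that, modulo unitary conjugacy in $M$,
\[
N_1^{t_1} = \overline{\bigotimes}_{k\in I_1} M_k \und (N_2\bar{\ten}\cdots\bar{\ten} N_m)^{1/t_1} = \overline{\bigotimes}_{k\in J_1} M_k.
\]
Since $N_1$ is prime, so is any amplification $N_1^{t_1}$; hence $|I_1|=1$, say $I_1=\{\sigma(1)\}$, giving $N_1^{t_1}=M_{\sigma(1)}$.

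For the inductive step, rewrite the residual identity as $N_2^{1/t_1}\bar{\ten}(N_3\bar{\ten}\cdots\bar{\ten} N_m)=\overline{\bigotimes}_{k\in J_1} M_k$. The right-hand side is again a tensor product of generalized $q$-gaussians satisfying the hypotheses of Theorem~1.1, so a second application combined with the primeness of $N_2$ yields a single index $\sigma(2)\in J_1$ and a scale $t_2$ with $N_2^{t_2}=M_{\sigma(2)}$. Iterating, at step $j$ the primeness of $N_j$ forces the first block of the partition to be a singleton $\{\sigma(j)\}$, producing $N_j^{t_j}=M_{\sigma(j)}$.

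The process terminates with $m=n$: if $m>n$, then after $n$ steps all indices in $\{1,\ldots,n\}$ have been consumed and the residual identity would read (an amplification of) $N_{n+1}\bar{\ten}\cdots\bar{\ten} N_m=\cz$, impossible since $N_{n+1}$ is a diffuse factor; if $m<n$, then after $m$ steps all $N_j$'s are exhausted but a nonempty tensor product of $M_k$'s would have to collapse to the scalars, again impossible. Thus $\sigma$ is a permutation of $\{1,\ldots,n\}$, and a relabeling gives $N_k^{t_k}=M_k$ modulo unitary conjugacy in $M$. The normalization $\prod_k t_k=1$ is forced because each application of Theorem~1.1 contributes reciprocal amplifications on its two halves; tracking these through the iteration via the identification $(A\bar{\ten} B)^s=A^s\bar{\ten} B$ produces a global balance of scales.

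For the ``In particular'' clause, each $N_j=\Gamma_{q_j}(\cz,T_j\ten K_j)$ with the stated sub-exponential and finite-dimensional conditions is a non-amenable, strongly solid $II_1$ factor by Cor.~7.4 of \cite{JungeUdreaGQC}; a standard Ozawa--Popa type argument then shows $N_j$ is prime: were $N_j=P\bar{\ten} Q$ with both $P,Q$ diffuse, then at least one factor (say $Q$) would be non-amenable, and choosing an amenable diffuse subalgebra $A\subset P$, its normalizer $\mathcal{N}_{N_j}(A)''$ would contain $Q$ and hence be non-amenable, contradicting strong solidity. The main obstacle is the careful bookkeeping of amplification scales across the iteration and the verification at each step that the reduced problem remains in the scope of Theorem~1.1; both of these are routine.
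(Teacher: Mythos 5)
Your proposal is correct and matches the argument the paper intends (the paper states Corollary 1.3 without proof, as the same iteration of Theorem 1.1 used for Corollary 1.2, with primeness of each $N_j$ forcing every block of the partition to be a singleton and thereby removing the hypothesis $m \geq n$). Your bookkeeping of the reciprocal amplifications and the strong-solidity argument for the ``in particular'' clause are the standard and intended ones.
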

In particular, $M_k$ and / or $N_j$ could be any one of the examples in 4.4.1, 4.4.2, 4.4.3 in \cite{JungeUdreaGQC}. Thus, if $M_i, 1\leq i \leq n$, and $N_j, 1\leq j \leq m$, are generalized $q$-gaussian von Neumann algebras as above and $m \neq n$, then $M_1 \bar{\ten} \cdots \bar{\ten} M_n \ncong N_1 \bar{\ten} \cdots \bar{\ten} N_m$.
\section{Proof of the main theorem.}
Throughout this section, we freely use notations and results from Section 3 of \cite{JungeUdreaGQC}. We start by stating some preliminary technical results. The first one is Prop. 2.7 in \cite{PoVaI}. If $(M,\tau)$ is a tracial von Neumann algebra and $P, Q \subset M$ are von Neumann subalgebras, we say that $P$ is amenable relative to $Q$ (inside $M$) if there exists a $P$-central state $\Om$ on $B(L^2(M)) \cap (Q^{op})'$ such that $\Om|_M=\tau$ (see e.g. Def. 2.2 in \cite{PoVaI}).
\begin{prop}Let $(M,\tau)$ be a tracial von Neumann algebras and let $Q_1, Q_2 \subset M$ be von Neumann subalgebras. Assume that $Q_1, Q_2$ form a commuting square, which means $E_{Q_1}\circ E_{Q_2}=E_{Q_2} \circ E_{Q_1}$, where $E_{Q_1},E_{Q_2}$ are the conditional expectations of $M$ onto $Q_1, Q_2$ respectively, and that $Q_1$ is regular in $M$. Let $P \subset M$ be a von Neumann subalgebra which is amenable relative to both $Q_1$ and $Q_2$. Then $P$ is amenable relative to $Q_1 \cap Q_2$.
\end{prop}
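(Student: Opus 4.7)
This is Proposition~2.7 of \cite{PoVaI}, and the plan is to follow the general strategy of that paper. Recall that $P$ is amenable relative to $Q$ inside $M$ iff there exists a $P$-central state $\Phi$ on $\langle M, e_Q\rangle = B(L^2(M)) \cap (Q^{op})'$ with $\Phi|_M = \tau$. The commuting square hypothesis yields the Jones-projection identity $e_{Q_1}e_{Q_2} = e_{Q_2}e_{Q_1} = e_{Q_1\cap Q_2}$, and since $(Q_1\cap Q_2)^{op}\subset Q_i^{op}$ one obtains inclusions $\langle M, e_{Q_i}\rangle\subset \langle M, e_{Q_1\cap Q_2}\rangle$ for $i=1,2$. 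What has to be produced is therefore a $P$-central state on the larger basic construction extending $\tau$.

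Let $\Phi_i$ denote the $P$-central hypertrace on $\langle M, e_{Q_i}\rangle$ given by the hypothesis. My plan is, first, to use $\Phi_2$ together with the identity $e_{Q_1\cap Q_2} = e_{Q_1}e_{Q_2}$ to construct a $P$-bimodular ucp map
\[\Theta : \langle M, e_{Q_1\cap Q_2}\rangle \longrightarrow \langle M, e_{Q_1}\rangle\]
that intuitively ``integrates out'' the $Q_2$-direction of the enlarged basic construction. The composition $\Phi_1\circ\Theta$ will then be a state on $\langle M, e_{Q_1\cap Q_2}\rangle$ extending $\tau$, but a priori only central with respect to the portion of $P$ whose elements interact compatibly with both conditional expectations.

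The second step will be to upgrade this partial centrality to genuine $P$-centrality by Dixmier-style averaging along $\N_M(Q_1)$. For each $u \in \N_M(Q_1)$ one has $u e_{Q_1} u^* = e_{Q_1}$, and the commuting square lets one verify that each $\Phi_1\circ\Theta\circ\mathrm{Ad}(u)$ retains the required normalization and partial centrality. A weak-$*$ cluster point of convex averages of these conjugates will produce a state on $\langle M, e_{Q_1\cap Q_2}\rangle$ invariant under all of $\N_M(Q_1)$; the regularity assumption $\N_M(Q_1)''=M$ then promotes this invariance to $M$-centrality, in particular to the desired $P$-centrality. The hard part will be the first step: constructing $\Theta$ so that it is genuinely $P$-bimodular and completely positive requires careful use of the commuting square identity, and it is precisely there that both hypotheses (commuting square of $Q_1,Q_2$ and, subsequently, regularity of $Q_1$) enter essentially.
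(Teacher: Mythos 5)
The paper itself gives no proof of this statement: it is quoted as Proposition~2.7 of \cite{PoVaI}, so the comparison must be with the argument there. That argument runs through the bimodule characterization of relative amenability ($P$ is amenable relative to $Q$ iff $L^2(M)$ is weakly contained in $L^2\langle M,e_Q\rangle\cong L^2(M)\otimes_Q L^2(M)$ as an $M$-$P$ bimodule): the two hypotheses are chained to give $L^2(M)\prec L^2(M)\otimes_{Q_1}L^2(M)\otimes_{Q_2}L^2(M)$ as $M$-$P$ bimodules, and then the commuting square identity together with regularity of $Q_1$ is used to embed, for each $u\in\mathcal{N}_M(Q_1)$, the sub-bimodule $L^2(M)\otimes_{Q_1}\overline{Q_1uQ_2}\otimes_{Q_2}L^2(M)$ isometrically and $M$-$M$ bimodularly into $L^2(M)\otimes_{Q_1\cap Q_2}L^2(M)$ via $x\otimes u\otimes y\mapsto xu\otimes y$; the point is that $u^*E_{Q_1}(\,\cdot\,)u=E_{Q_1}(u^*\,\cdot\,u)$ and $E_{Q_2}\circ E_{Q_1}=E_{Q_1\cap Q_2}$, and regularity guarantees that the spaces $\overline{Q_1uQ_2}$ span $L^2(M)$.

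Your proposal has two genuine problems. First, the entire weight of the argument rests on the map $\Theta$, which you do not construct and whose bimodularity you explicitly leave unresolved; ``central with respect to the portion of $P$ whose elements interact compatibly with both conditional expectations'' is not a well-defined property, and if $\Theta$ really were unital, $P$-bimodular and the identity on $M$, then $\Phi_1\circ\Theta$ would already finish the proof and your second step would be superfluous. Second, and more seriously, the second step aims at a conclusion that is false in general: a state on $\langle M,e_{Q_1\cap Q_2}\rangle$ restricting to $\tau$ on $M$ and invariant under $\mathrm{Ad}(u)$ for all $u\in\mathcal{N}_M(Q_1)$, hence $M$-central by regularity, would witness that $M$ itself is amenable relative to $Q_1\cap Q_2$. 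Take $Q_1=Q_2=\mathbb{C}1$ and $P=\mathbb{C}1$ inside a non-amenable $M$: all hypotheses of the proposition hold, yet an $M$-central state on $B(L^2(M))$ restricting to $\tau$ is a hypertrace and would force $M$ to be amenable. So no averaging procedure can deliver what you ask of it. Even setting that aside, a weak-$*$ cluster point of convex averages over $\mathcal{N}_M(Q_1)$ is not automatically invariant under the whole normalizer; that would require a fixed-point theorem, i.e.\ amenability of the group $\mathcal{N}_M(Q_1)$, which is not available. Regularity of $Q_1$ has to enter as in the bimodule embedding above, not through Dixmier-type averaging toward $M$-centrality.
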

The next result is Prop. 12 in \cite{OzawaPopaUPF}.
\begin{prop} Let $M=M_1 \bar{\ten} M_2$ and $N \subset M$ be type $II_1$ factors. Assume that $N \prec_M M_1$ and $N'\cap M$ is a factor. Then there exists a decomposition $M=M_1^t \bar{\ten} M_2^{1/t}$ for some $t>0$ and a unitary $u \in \U(M)$ such that $uNu^* \subset M_1^t$.
\end{prop}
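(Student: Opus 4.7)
My plan is to combine Popa's intertwining-by-bimodules machinery with the factoriality of $N'\cap M$ to promote the corner intertwining $N\prec_M M_1$ into a genuine unitary conjugacy, and then exploit the tensor structure of $M$ to extract the decomposition $M_1^t\bar{\ten}M_2^{1/t}$.

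First I would unpack $N\prec_M M_1$ in its standard equivalent form: after amplifying by some $M_k(\cz)$, there exist nonzero projections $p\in N$ and $q\in M_1\ten M_k(\cz)$, a unital normal $*$-homomorphism $\phi:pNp\to q(M_1\ten M_k(\cz))q$, and a nonzero partial isometry $v$ with $xv=v\phi(x)$ for $x\in pNp$, satisfying $v^*v\in p(N'\cap M)p$ and $vv^*\in\phi(pNp)'\cap q(M\ten M_k(\cz))q$. The decisive step is to turn this partial intertwiner into a unitary conjugacy. Since $N'\cap M$ is a factor, so is $p(N'\cap M)p$; a standard exhaustion/maximality argument (pick successive mutually orthogonal partial isometries of the above form and aggregate them in a matrix amplification) produces, after amplifying $M$ by a suitable $t>0$, a unitary $u\in\U(M^t)$ with $uNu^*\subset M_1^t$ inside $M^t=(M_1\bar{\ten}M_2)^t$.

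To obtain the global tensor decomposition, set $\tilde N=uNu^*\subset M_1^t$. The natural amplified copy of $M_2$ in $M^t$ commutes with $\tilde N$ and therefore sits inside $\tilde N'\cap M^t$, which is itself a factor (an amplification of the factor $N'\cap M$). One then checks that the commuting subfactors $\tilde N'\cap M_1^t$ and the amplified $M_2$ generate a tensor product inside $\tilde N'\cap M^t$, forcing a decomposition $M^t=M_1^t\bar{\ten}M_2^{1/t}$; a final unitary twist inside $\tilde N'\cap M^t$ realigns this with the original tensor decomposition of $M$.

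The main obstacle is the upgrading step: factoriality of $N'\cap M$ is essential in order to "spread" the partial isometry $v$ into a full unitary by orthogonalization inside a matrix amplification, a maneuver that fails in general when $N'\cap M$ has nontrivial center. The rest is a comparatively routine manipulation of commuting subfactors inside a tensor product of type $II_1$ factors.
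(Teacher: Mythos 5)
First, a point of reference: the paper does not prove this statement at all --- Proposition 2.2 is quoted verbatim as Proposition 12 of Ozawa--Popa \cite{OzawaPopaUPF} --- so the comparison has to be with the proof in that source. Your outline follows the same route (Popa's intertwining, patching partial isometries using factoriality of $N'\cap M$, then reading off the amplified tensor decomposition), but the decisive middle step is asserted rather than carried out, and as sketched it has a genuine gap. With the convention $xv=v\phi(x)$ for $x\in pNp$, the supports are $vv^*\in p(N'\cap M)p$ and $v^*v\in\phi(pNp)'\cap q(M_k(\cz)\ten M)q$ (you have them interchanged, which is cosmetic). Factoriality of $N'\cap M$ does let you normalize the \emph{left} supports: if $w\in N'\cap M$ is a partial isometry with $w^*w\le vv^*$, then $wv$ is again an intertwiner with left support $ww^*$, so comparison of projections in the factor $N'\cap M$ produces finitely many intertwiners whose left supports sum to $1$, and aggregating them gives a row $V$ with $VV^*=1$. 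The gap is on the \emph{right}: $V^*V$ lies only in $\Theta(pNp)'\cap QM_m(M)Q$, not in $M_m(M_1)$, so $V^*NV=\Theta(pNp)\,V^*V$ sits inside an amplification of $M=M_1\bar{\ten}M_2$ but is not visibly contained in any amplification of $M_1$. To finish one must move $V^*V$ to a projection of $M_m(M_1)$ by a partial isometry in $\Theta(pNp)'\cap QM_m(M)Q=(\Theta(pNp)'\cap QM_m(M_1)Q)\,\bar{\ten}\,M_2$; this uses the second structural ingredient --- that this relative commutant contains the type $II_1$ factor $M_2$ with the correct unit --- together with a genuine comparison/center-valued-trace argument (a projection in $A\bar{\ten}M_2$ is \emph{not} in general equivalent to one in $A$). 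This bookkeeping is the actual content of the proposition, and ``a standard exhaustion/maximality argument'' does not cover it.

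Two further, smaller problems with the endgame. You produce a unitary $u\in\U(M^t)$, whereas $N$ is a subalgebra of $M$ and the statement requires $u\in\U(M)$ together with a decomposition of $M$ itself; and the identity ``$M^t=M_1^t\bar{\ten}M_2^{1/t}$'' cannot hold for $t\neq1$, since the right-hand side is isomorphic to $M$, not to $M^t$. Once one has $u\in\U(M)$ with $uNu^*\subset M_1^t$ and $M_1^t$ in standard position relative to the original tensor splitting, the decomposition $M=M_1^t\bar{\ten}(M_1^t)'\cap M$ with $(M_1^t)'\cap M\cong M_2^{1/t}$ is automatic, so the commuting-subfactor argument you sketch at the end is not where the work lies; it is all in the step above.
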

The next result will be needed in the proof of Thm. 1.1. It is an analogue of Prop. 15 in \cite{OzawaPopaUPF}. For convenience, if $M=M_1 \bar{\ten} \cdots \bar{\ten} M_n$ and $1 \leq k \leq n$, let's denote by
\[\widehat{M}_k = M_1 \bar{\ten} \cdots M_{k-1} \bar{\ten} 1 \bar{\ten} M_{k+1} \cdots \bar{\ten} M_n \subset M.\]
More generally, for every subset $I \subset \{1,\ldots,n\}$, we will denote by $\widehat{M}_I$ the von Neumann algebra
\[\widehat{M}_I=\overline{\bigotimes}_{i \notin I} M_i \subset M.\]
\begin{prop} Let $M_i=\Gamma_{q_i}(\cz, S_i\ten H_i)$ be generalized $q$-gaussian von Neumann algebras with scalar coefficients coming from symmetric independent copies and having sub-exponential dimensions over $\cz$ of the spaces $(D_i)_k(S_i)$, for all $1 \leq i \leq n$. Let $M=M_1 \bar{\ten} \cdots \bar{\ten} M_n$ and assume that $N \subset M$ is a type $II_1$ factor such that $N' \cap M$ is a non-amenable factor. Then there exists $t>0$, $1 \leq k \leq n$ and a unitary $u \in \U(M)$ such that $uNu^* \subset (\widehat{M}_k)^t$.
\end{prop}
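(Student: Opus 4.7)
The strategy is to produce some $1\le k\le n$ with $N\prec_M \widehat{M}_k$, at which point Proposition~2.2 (applied to the decomposition $M = \widehat{M}_k\bar{\ten} M_k$ and to the type $II_1$ factor $N$, using that $N'\cap M$ is a factor by hypothesis) delivers the desired $t>0$ and unitary $u\in\U(M)$ with $uNu^* \subset (\widehat{M}_k)^t$. Thus the entire argument reduces to finding such a $k$.

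I would produce $k$ by contradiction, so assume $N\not\prec_M \widehat{M}_k$ for every $1\leq k\leq n$. For each $k$, write $M = M_k \bar{\ten} \widehat{M}_k$ and invoke the relative strong solidity of $M_k$ from \cite{JungeUdreaGQC}, Cor.~7.4 (specialized to $B=\cz$). Together with the Popa-Vaes style normalizer-intertwining dichotomy that flows from the closable-derivation / malleable deformation machinery underlying that result, this yields the following standard dichotomy: for every subfactor $Q\subset M$, either $Q\prec_M \widehat{M}_k$, or $\mathcal{N}_M(Q)''$ is amenable relative to $\widehat{M}_k$ inside $M$. Applied to $Q=N$, the assumption $N\not\prec_M \widehat{M}_k$ forces $\mathcal{N}_M(N)''$, and hence the subalgebra $N'\cap M\subset \mathcal{N}_M(N)''$, to be amenable relative to $\widehat{M}_k$; this holds for every $k$.

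Now I would chain these relative amenabilities via Proposition~2.1. The subalgebras $\widehat{M}_k\subset M$ pairwise form commuting squares (their conditional expectations $E_{\widehat{M}_k}$ commute, being tensor products of identities and traces in the appropriate factors), each $\widehat{M}_k$ is regular in $M$ (its normalizer contains both $M_k$, which commutes with it, and $\widehat{M}_k$ itself, and these generate $M$), and $\bigcap_{k=1}^n \widehat{M}_k = \cz$. An $n$-fold iteration of Proposition~2.1 — intersecting the relative-amenability algebra with one additional $\widehat{M}_k$ at each step, and invoking regularity of $\widehat{M}_k$ at step $k$ — therefore produces amenability of $N'\cap M$ relative to $\cz$, i.e., plain amenability. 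This contradicts the hypothesis that $N'\cap M$ is non-amenable, so there exists $k$ with $N\prec_M \widehat{M}_k$, and Proposition~2.2 finishes the proof as described.

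The main obstacle is the dichotomy invoked in the second paragraph. The strong solidity in \cite{JungeUdreaGQC}, Cor.~7.4 is intrinsic to $M_k$, whereas here one needs its tensor-product version: a dichotomy for subalgebras of $M_k\bar{\ten} \widehat{M}_k$ with $\widehat{M}_k$ playing the role of a coefficient algebra. Establishing this requires re-running the deformation-rigidity and $s$-malleable / closable-derivation arguments of the prequel inside $M$ rather than inside $M_k$ alone; it is precisely at this step that the hypotheses on the sub-exponential growth of $\dim_{\cz}((D_k)_i(S_k))$ and the finite-dimensionality of $H_k$ enter the proof. Once this tensor-product dichotomy is in hand, the remaining steps (iterating Proposition~2.1 to eliminate the alternatives one index at a time, and applying Proposition~2.2 at the end) are essentially formal.
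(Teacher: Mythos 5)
There is a genuine gap in the second paragraph: you apply the normalizer--intertwining dichotomy to the arbitrary subfactor $Q=N$, claiming that for every subfactor $Q\subset M$ either $Q\prec_M\widehat{M}_k$ or $\N_M(Q)''$ is amenable relative to $\widehat{M}_k$. No such unconditional dichotomy is available, and as stated it is false: take $Q=M_k$, which does not embed into $\widehat{M}_k$ inside $M$, while $\N_M(M_k)''=M$ is amenable relative to $\widehat{M}_k$ only if $M_k$ is amenable. The relative strong solidity of Cor.~7.4 in \cite{JungeUdreaGQC}, like all results of Ozawa--Popa type, gives the dichotomy only for subalgebras that are themselves amenable relative to the target $\widehat{M}_k$, and $N$ need not be. The paper's proof supplies exactly the missing step: if $N\nprec_M\widehat{M}_k$, then by Cor.~F.14 in \cite{BrownOzawa} there is an \emph{abelian} subalgebra $\A\subset N$ with $\A\nprec_M\widehat{M}_k$; being abelian, $\A$ is automatically amenable relative to $\widehat{M}_k$, so Cor.~7.4 applies to $\A$ and forces $N_M(\A)''$ -- which contains $N'\cap M$ -- to be amenable relative to $\widehat{M}_k$. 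Without passing to $\A$, your deduction that $N'\cap M$ is amenable relative to every $\widehat{M}_k$ is unjustified.

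Two further remarks. First, the ``main obstacle'' you flag at the end -- needing a tensor-product version of Cor.~7.4 with $\widehat{M}_k$ as coefficients -- does not require re-running any deformation/rigidity arguments: the paper observes that $\widehat{M}_k\bar{\ten}\Gamma_{q_k}(\cz,S_k\ten H_k)=\Gamma_{q_k}(\widehat{M}_k,S_k\ten H_k)$ is again a generalized $q$-gaussian algebra, now with coefficient algebra $\widehat{M}_k$ and the same (sub-exponential) dimensions, so Cor.~7.4, which is already stated for general coefficients $B$, applies verbatim. Second, your arrangement of the contradiction (assume $N\nprec_M\widehat{M}_k$ for all $k$, deduce relative amenability for all $k$, then chain via Prop.~2.1) is a harmless reordering of the paper's argument, which first fixes a single $k$ with $N'\cap M$ not amenable relative to $\widehat{M}_k$ and then works only with that $k$; the commuting-square and regularity observations you make for Prop.~2.1 are correct. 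The rest of your outline (concluding via Prop.~2.2) matches the paper.
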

\begin{proof}Let's first note that there exists a $1 \leq k \leq n$ such that $N'\cap M$ is not amenable relative to $\widehat{M}_k$. Indeed, if this were not the case, since the subalgebras $\widehat{M}_I, \widehat{M}_J$ form a commuting square for all subsets $I, J \subset \{1,\ldots,n\}$ and all of them are regular in $M$, by repeatedly applying Prop. 2.1, we would obtain that $N'\cap M$ is amenable relative to $\bigcap_{k=1}^n \widehat{M}_k=\cz$, i.e. 
$N'\cap M$ is amenable, a contradiction. Fix a $k$ such that $N'\cap M$ is not amenable relative to $\widehat{M}_k$. Suppose that $N \nprec_M \widehat{M}_k$. By Cor. F.14 in \cite{BrownOzawa} there exists an abelian von Neumann subalgebra $\A \subset N$ such that $\A \nprec_M \widehat{M}_k$. Let's make the following general remark. Suppose $\Gamma_q(B,S\ten H)$ is associated to a sequence of symmetric independent copies $(\pi_j,B,A,D)$ and let $\M$ be any tracial von Neumann algebra. Then
\[\M \bar{\ten} \Gamma_q(B,S\ten H)=\Gamma_q(B \bar{\ten} \M,S\ten H),\]
associated to a new sequence of symmetric independent copies $(\tilde{\pi}_j,\tilde B, \tilde A,\tilde D)$, defined by $\tilde B=B\bar{\ten} \M$, $\tilde A=A \bar{\ten} \M$, $\tilde D=D \bar{\ten} \M$ and 
$\tilde{\pi}_j:\tilde A \to \tilde D$ are given by $\tilde{\pi}_j(a \ten x)=\pi_j(a) \ten x$, for $a \in A, x \in \M$. 
Now note that
\[\A \subset M=\widehat{M}_k \bar{\ten} M_k=\widehat{M}_k \bar{\ten} \Gamma_{q_k}(\cz,S_k\ten H_k) = \Gamma_{q_k}(\widehat{M}_k,S_k\ten H_k).\]
It's trivial to check that $dim_{\widehat{M}_k}((D_k)_i(S_k))=dim_{\cz}(D_k)_i(S_k)$ are sub-exponential. Since $\A$ is amenable relative to $\widehat{M}_k$, by applying Cor. 7.4 in \cite{JungeUdreaGQC}, we must have that either $\A \prec_M \widehat{M}_k$ or $N_M(\A)''$ is amenable relative to $\widehat{M}_k$. The first half of the alternative is precluded by the choice of $\A$, and the second would imply that $N'\cap M \subset N_M(\A)''$ is also amenable relative to $\widehat{M}_k$, which is a contradiction. Thus $N \prec_M \widehat{M}_k$, and by Prop. 2.2 we see that there exists an unitary $u \in \U(M)$ and a $t>0$ such that $uNu^* \subset (\widehat{M}_k)^t$.
\end{proof}
Now we can prove Thm. 1.1. The proof proceeds verbatim as in \cite{OzawaPopaUPF}. We nevertheless give details for completeness.
\begin{proof}[proof of Theorem 1.1.] 
We use induction over $n$. The case $n=0$ is trivial. Let $M=\overline{\bigotimes}_{k=1}^n M_k=N_1 \bar{\ten} N_2$. Since $M$ is non-amenable, we can assume that $N_2$ is non-amenable. By Prop. 2.3 there exist $t>0$, $1 \leq k \leq n$ and $u \in \U(M)$ such that $uN_1u^* \subset (\widehat{M}_k)^t$. Set $\M_1=\widehat{M}_k$, $\M_2=M_k$ and $N_{2,1}=N_1'\cap u\M_1^tu^*$. Then we see that
\[N_2=N_1'\cap M=u^*(N_{2,1} \bar{\ten} \M_2^{1/t})u \subset u^*(\M_1^t \bar{\ten} \M_2^{1/t})u=M,\]
and $u\M_1^tu^*$ is generated by $N_1$ and $N_{2,1}$. Using the induction hypothesis, we can find an $s>0$, a partition $I_1, I_{2,1}$ of $\{1,\ldots,n\} \setminus \{k\}$ such that $N_1=(\overline{\bigotimes}_{j \in I_1} M_j)^s$ and $N_{2,1}=(\overline{\bigotimes}_{j \in I_{2,1}} M_j)^{t/s}$ after conjugating with a unitary element in $(\widehat{M}_k)^t$. If we now set $I_2=I_{2,1} \cup \{k\}$, the proof is complete.
\end{proof}
\begin{rem}The statement of the results and the proofs remain verbatim the same if one assumes that $B_k$ is a finite dimensional factor for every $1 \leq k \leq n$.
\end{rem}

\bibliographystyle{amsplain}

\bibliographystyle{amsplain}
\bibliography{thebibliography}
\end{document}